\documentclass{amsart}

\usepackage{amssymb}
\usepackage{amsthm}
\usepackage{amsmath} 
\usepackage{amsbsy}
\usepackage[all]{xy}
\usepackage{bm}
\usepackage{hyperref}
\usepackage{tikz}
\usepackage{cite}
\setlength\parindent{0pt}

\newtheorem*{theorem}{Theorem}

\newtheorem*{corollary}{Corollary}

\newtheorem{lemma}{Lemma}

\newcommand{\abs}[1]{\lvert #1 \rvert}
\newcommand{\norm}[1]{\lVert #1 \rVert}

\begin{document}

\title[]{Optimal Trapping for Brownian motion: a nonlinear analogue of the torsion function} \keywords{Drift Diffusion, Exit Time, Isoperimetric Inequality, Torsion function.}
\subjclass[2010]{35B51, 49K20 (primary) and 60J60 (secondary)}

\thanks{The research of J.L. was supported in part by the National Science Foundation under award DMS-1454939. S.S. was partially supported by the NSF (DMS-1763179) and the Alfred P. Sloan Foundation. J.L. would like to thank Jian Ding and James Nolen for helpful discussion.}

\author[]{Jianfeng Lu}
\address[Jianfeng Lu]{Department of Mathematics, Department of Physics, and Department of Chemistry,
Duke University, Box 90320, Durham NC 27708, USA}
\email{jianfeng@math.duke.edu}

\author[]{Stefan Steinerberger}
\address[Stefan Steinerberger]{Department of Mathematics, Yale University, New Haven, CT 06510, USA}
\email{stefan.steinerberger@yale.edu}

\begin{abstract} We study the problem of maximizing the expected lifetime of drift diffusion in a bounded domain. More
formally, we consider the PDE
\[ - \Delta u + b(x) \cdot \nabla u = 1 \qquad \mbox{in}~\Omega\]
subject to Dirichlet boundary conditions for $\|b\|_{L^{\infty}}$ fixed. We show that, in any given $C^2-$domain $\Omega$, the vector field maximizing the expected lifetime is (nonlinearly) coupled to the solution and satisfies $b = -\|b\|_{L^{\infty}} \nabla u/ \abs{\nabla u}$ which reduces the problem to the study of the nonlinear PDE
\[ -\Delta u - b \cdot \left| \nabla u \right| = 1,\]
where $b = \|b\|_{L^{\infty}}$ is a constant.
We believe that this PDE is a natural and interesting nonlinear analogue of the torsion function. We prove that, for fixed volume,  $\| \nabla u\|_{L^1}$ and $\|\Delta u\|_{L^1}$ are maximized if $\Omega$ is the ball (the ball is also known to maximize $\|u\|_{L^p}$ for $p \geq 1$ from a result of Hamel \& Russ).
\end{abstract}
\maketitle

\vspace{-10pt}

\section{Introduction}

We consider, for open and bounded $\Omega \subset \mathbb{R}^d$, solutions of the equation
\begin{align*}
 -\Delta u+ b(x) \cdot \nabla u &= 1  \qquad \text{in }\Omega \\
u &= 0 \qquad \text{on }\partial \Omega.
\end{align*}

This equation arises naturally as the expected lifetime of a drift-diffusion
$$ \mathrm{d} X_t = b(X_t) \,\mathrm{d} t + \sqrt{2} \,\mathrm{d} B_t,$$
where $B$ is standard Brownian motion and $b:\Omega \rightarrow \mathbb{R}^d$ is a vector field. 

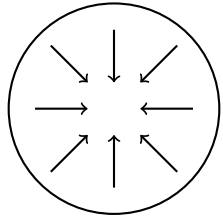
\begin{figure}[h!]
\centering
\begin{tikzpicture}[scale=0.7]
\draw [thick] (0,0) circle (2cm);
\draw [thick, ->] (-1.5,0) -- (-0.5,0);
\draw [thick, ->] (1.5,0) -- (0.5,0);
\draw [thick, ->] (0,-1.5) -- (0,-0.5);
\draw [thick, ->] (0,1.5) -- (0,0.5);
\draw [thick, ->] (1.2, 1.2) -- (0.5,0.5);
\draw [thick, ->] (-1.2, 1.2) -- (-0.5,0.5);
\draw [thick, ->] (1.2, -1.2) -- (0.5,-0.5);
\draw [thick, ->] (-1.2, -1.2) -- (-0.5,-0.5);
\end{tikzpicture}
\caption{Brownian motion stays trapped for the longest time if it moves inside a ball and the vector field pushes it radially inside.}
\end{figure}

Our main question is the following: for what vector field $b$ (fixing its maximal strenght $\|b\|_{L^{\infty}}$) ncan we maximize the expected lifetime of Brownian motion?  It is clear that allowing for a stronger vector field $\|b\|_{L^{\infty}}$ increases our ability to trap
the particle. It is not terribly difficult to see
that, given $\|b\|_{L^{\infty}}$ and $|\Omega|$, the quantity $\|u\|_{L^{p}}$ is finite and can be controlled in terms of those
two parameters and the dimension,  however, we are interested in the sharp dependence.

\section{The Result}
\subsection{Main result.} We now state our main result.

\begin{theorem} Among all bounded $C^2-$domains $\Omega$ with fixed volume and all vector fields $b:\Omega \rightarrow \mathbb{R}^d$ with  $\|b\|_{L^{\infty}}$  fixed, the solution of
\begin{align*}
 -\Delta u+ b \cdot \nabla u &= 1  \qquad \text{in }\Omega \\
u &= 0 \qquad \text{on }\partial \Omega
\end{align*}
maximizes
$$\int_{\Omega}{ |\nabla u| dx}, \qquad \mbox{and} \qquad \int_{\Omega}{ -\Delta u dx},$$
when $\Omega$ is the ball and $b = -\|b\|_{L^{\infty}} \nabla u/|\nabla u|$. 
\end{theorem}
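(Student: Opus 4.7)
The plan is to run a Talenti-type level-set analysis, enhanced to track the drift term, and close the argument with a Gronwall comparison. Fix an arbitrary admissible pair $(\Omega, \tilde b)$ with $|\Omega| = V$ and $\|\tilde b\|_{L^{\infty}} = b$, let $u$ be the corresponding solution, and parameterize the level sets by $t \in [0,\max u]$: set $m(t) = |\{u > t\}|$, $P(t) = \mathcal{H}^{d-1}(\{u=t\})$, $F(t) = \int_{\{u=t\}} |\nabla u|\, d\mathcal{H}^{d-1}$, and $H(t) = \int_{\{u>t\}} |\nabla u|\, dx$. Integrating the PDE over $\{u>t\}$ and using $\|\tilde b\|_{L^{\infty}} \leq b$ yields
$$F(t) = m(t) - \int_{\{u>t\}} \tilde b \cdot \nabla u\, dx \leq m(t) + b\, H(t),$$
with equality precisely when $\tilde b = -b\, \nabla u/|\nabla u|$, which is the nonlinear coupling singled out in the statement.

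The key pointwise inequalities on a level set are Cauchy-Schwarz, $P(t)^2 \leq F(t)(-m'(t))$ (equality iff $|\nabla u|$ is constant on $\{u=t\}$), and the Euclidean isoperimetric inequality $P(t) \geq d\omega_d^{1/d} m(t)^{(d-1)/d}$ (equality iff the level set is a sphere); these are simultaneously sharp precisely when $u$ is radial on a ball. Combining all three estimates and re-parameterizing $H$ as a function of $m$ via $dH/dm = P(t)/(-m'(t))$ gives the differential inequality
$$\frac{dH}{dm} \leq \frac{m + b\, H(m)}{d\omega_d^{1/d}\, m^{(d-1)/d}}, \qquad H(0) = 0,$$
which becomes an equality for the radial solution $v$ on the ball $\Omega^*$ of volume $V$ with the nonlinear drift.

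A standard Gronwall comparison for this linear differential inequality yields $H(m) \leq \phi(m)$ for all $m \in [0,V]$, where $\phi$ is the profile for $v$ on $\Omega^*$. Evaluating at $m = V$ gives $\int_\Omega |\nabla u|\, dx = H(V) \leq \phi(V) = \int_{\Omega^*} |\nabla v|\, dx$, and since $F(0) = \int_\Omega (-\Delta u)\, dx$ and $F(0) \leq V + b\, H(V)$, one also deduces $\int_\Omega (-\Delta u)\, dx \leq V + b \int_{\Omega^*} |\nabla v|\, dx = \int_{\Omega^*} (-\Delta v)\, dx$. Both integrals are therefore maximized by the ball with the nonlinear coupling. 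The main technical point is justifying Gronwall despite the coefficient $b/(d\omega_d^{1/d} m^{(d-1)/d})$ being singular at $m=0$; since $(d-1)/d < 1$, the singularity is integrable and the comparison goes through, with equality throughout precisely when $\Omega$ is a ball and $\tilde b$ is the nonlinear coupling.
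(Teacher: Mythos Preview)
Your argument is correct and rests on the same three ingredients as the paper---Cauchy--Schwarz on level sets, the isoperimetric inequality, and a differential inequality in the volume variable---but the implementation is organized differently. The paper first applies a maximum-principle comparison (its Lemma~1) to reduce to the nonlinear equation $-\Delta u - b|\nabla u| = 1$, and then exploits that equation's invariance under adding constants: defining $f(c) = b\sup_{|\Omega|=c}\int_\Omega |\nabla u|\,dx$, it passes from $\Omega$ to the superlevel set $\Omega_\varepsilon=\{u>\varepsilon\}$ (on which $u-\varepsilon$ again solves the same PDE with Dirichlet data) to obtain $f'(c)\le b(f(c)+c)/(c_d\,c^{(d-1)/d})$, and then checks in a separate computation (Lemma~4) that the ball saturates this. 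You bypass the reduction step entirely, absorbing the arbitrary drift via the pointwise bound $-\int_{\{u>t\}}\tilde b\cdot\nabla u\le b\,H(t)$, and obtain the identical differential inequality for the single-domain profile $H(m)$ rather than for a supremum; the comparison with the ball is then a Gronwall step instead of an ODE identity. Your route is more streamlined and does not need the preliminary Lemma~1 or the supremum device; the paper's route, on the other hand, isolates the shift-invariance of the nonlinear PDE as a structural feature, which it then reuses verbatim to prove the $L^\infty$ and $L^p$ corollaries.
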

  It is clear from the proof that the result is optimal
up to possibly the regularity conditions on the boundary of $\Omega$:
having an irregular boundary should make it more difficult to
effectively trap Brownian motion and one could thus expect that it is
possible to slightly weaken the assumption.  Our proof is based on
first showing that the vector field
$b = -\|b\|_{L^{\infty}} \nabla u/|\nabla u|$ is the best choice in
any domain $\Omega$ -- this nonlinear condition results in the
(mildly) nonlinear PDE
$$ -\Delta u - \|b\|_{L^{\infty}} |\nabla u| = 1.$$
This PDE has one notable property: it is invariant under adding constants. In particular, if $u$ is a solution to the equation on $\Omega$ with Dirichlet boundary conditions, then $u - \varepsilon$ is a solution to the PDE on the domain
$$ \Omega_{\varepsilon} = \left\{x \in \Omega: u(x) \geq \varepsilon\right\}.$$
This allows an elementary induction over level sets.
\begin{corollary}[also implied by Hamel \& Russ \cite{hamel2}] Under the same assumption,
$$ \|u\|_{L^p(\Omega)} \qquad \mbox{for}~p \in \mathbb{N}_{\geq 1}~\mbox{is maximized by the ball.}$$
\end{corollary}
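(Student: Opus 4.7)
\emph{Plan.} The strategy is to combine the layer-cake identity with a Talenti-style comparison of distribution functions. Write
\[
\|u\|_{L^p(\Omega)}^p \;=\; p\int_0^\infty t^{p-1}\,\mu(t)\,dt
\qquad\text{with}\qquad \mu(t) := |\{u>t\}|,
\]
and analogously $\mu^{*}(t) := |\{v>t\}|$ for the optimal solution $v$ on the ball $B$ with $|B|=|\Omega|$. The corollary follows at once from the pointwise comparison $\mu(t) \leq \mu^{*}(t)$ for every $t \geq 0$, and $\mu(0) = \mu^{*}(0) = |\Omega|$ is automatic.

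To prove the pointwise bound I would exploit the shift-invariance of the nonlinear PDE. On $\Omega_t := \{u>t\}$ the function $u-t$ vanishes on $\partial\Omega_t$ and still solves $-\Delta(u-t) - \|b\|_{L^\infty}|\nabla(u-t)| = 1$; the corresponding optimal vector field is simply the restriction of the original one. Applying the Main Theorem on $\Omega_t$ yields
\[
\int_{\Omega_t}(-\Delta u)\,dx \;\leq\; \mathcal{V}\bigl(\mu(t)\bigr),
\]
where $\mathcal{V}(V)$ denotes the value of $\int(-\Delta w)\,dx$ for the optimal solution $w$ on the ball of volume $V$. The divergence theorem rewrites the left-hand side as the level-set integral $\int_{\{u=t\}}|\nabla u|\,d\sigma$. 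Combining this with the Cauchy--Schwarz bound
\[
\mathcal{H}^{d-1}\bigl(\{u=t\}\bigr)^{2} \;\leq\; \int_{\{u=t\}}|\nabla u|\,d\sigma \;\cdot\; \int_{\{u=t\}}\frac{d\sigma}{|\nabla u|},
\]
the coarea formula $-\mu'(t) = \int_{\{u=t\}} d\sigma/|\nabla u|$, and the Euclidean isoperimetric inequality $\mathcal{H}^{d-1}(\{u=t\}) \geq d\,\omega_d^{1/d}\,\mu(t)^{(d-1)/d}$, produces the Talenti-type inequality
\[
-\mu'(t) \;\geq\; \frac{\bigl(d\,\omega_d^{1/d}\bigr)^{2}\,\mu(t)^{2(d-1)/d}}{\mathcal{V}\bigl(\mu(t)\bigr)} \;=:\; f\bigl(\mu(t)\bigr).
\]

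The decisive observation is that every single step above is saturated on the ball: the level sets of the radial $v$ are spheres (isoperimetric equality), $|\nabla v|$ is constant on each of them (Cauchy--Schwarz equality), and the Main Theorem is itself equality on the ball. Hence $\mu^{*}$ satisfies the corresponding differential \emph{equation} $-(\mu^{*})'(t) = f(\mu^{*}(t))$ with the identical initial condition $\mu^{*}(0) = |\Omega|$. Passing to the decreasing rearrangements $u^{\#}, v^{\#}$ on $[0,|\Omega|]$ converts the inequality into $(u^{\#})'(s) \geq -1/f(s) = (v^{\#})'(s)$; integrating from $s$ to $|\Omega|$ and using $u^{\#}(|\Omega|) = v^{\#}(|\Omega|) = 0$ gives $u^{\#} \leq v^{\#}$, equivalently $\mu \leq \mu^{*}$. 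The layer-cake identity then concludes for every $p \in \mathbb{N}_{\geq 1}$ (in fact for every $p \geq 1$).

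The main technical obstacle is the usual Talenti-style regularity bookkeeping: one needs Sard's theorem to ensure that $\mu$ is differentiable at almost every $t$ and that $\{u=t\}$ is a smooth hypersurface on a set of full measure, and one must verify that the restriction of $u$ to $\Omega_t$ really is the extremal solution on $\Omega_t$ so that the Main Theorem may legitimately be invoked. Both points are routine.
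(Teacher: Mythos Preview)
Your argument is correct and uses the same core ingredients as the paper---shift invariance of the nonlinear PDE, the Cauchy--Schwarz/coarea/isoperimetric chain, and the Main Theorem's bound on $\int_\Omega(-\Delta u)$---but you organize them differently. You run a classical Talenti-style comparison of distribution functions: with $\mu(t)=|\{u>t\}|$ you derive the differential inequality $-\mu'(t)\geq (d\,\omega_d^{1/d})^2\mu(t)^{2(d-1)/d}/\mathcal V(\mu(t))$, observe it is an equality on the ball, pass to inverse functions, and conclude $u^{\#}\leq v^{\#}$, whence the layer-cake formula gives every $p\geq 1$ at once. The paper instead parametrizes by volume rather than by level: it studies $h_p(c)=\sup_{|\Omega|=c}\|u\|_{L^p}^p$ and derives recursive differential inequalities $h_p'(c)\leq p\,h_{p-1}(c)\,(f(c)+c)/(c_d\,c^{(d-1)/d})^2$, bootstrapping from $p=1$ upward; this is why the paper's statement is restricted to $p\in\mathbb N$. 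The two schemes are essentially dual (your $\mathcal V$ is the paper's $f(c)+c$, and Lemma~5 is the inverse-function form of your inequality for $\mu$), but your packaging is cleaner for the $L^p$ conclusion and immediately yields the full range $1\leq p\leq\infty$. One small wording point: you do not need $u-t$ to be the \emph{extremal} solution on $\Omega_t$; it suffices that it solves the drift--diffusion equation there for some admissible vector field, which it does, so the Main Theorem applies directly.
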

 We emphasize that the Corollary is not new and known at a greater level of generality, for $1 \leq p \leq \infty$, from a very general rearrangement principle of Hamel \& Russ \cite{hamel2}. However, our proof is very different and gives a particularly elementary derivation for the case $p=\infty$.

\subsection{Existing results.} The case $b \equiv 0$ is classical. P\'olya \cite{pol} proved that the integral over the solution of $-\Delta u = 1$ increases under symmetrization. The statement for the $L^{\infty}-$norm follows from a now classical theorem of Talenti \cite{talenti}. We also refer to Ba\~nuelos \& Carroll~\cite{ban} and Burchard \& Schmuckenschl\"ager~\cite{almut}. The solution of $-\Delta u = 1$ has been studied in great detail, see e.g., \cite{banc, banc2, beck, lustein, makar, stein1, volkov}; we also refer to the textbooks of Baernstein \cite{al}, Bandle \cite{bandle} and P\'olya-Szeg\H{o} \cite{polsz} for more details about the case $b \equiv 0$. There is a general rearrangement inequality due to Hamel \& Russ \cite{hamel2} that can be applied to general semi-elliptic equations of the type
$$ -\mbox{div}(a(x) \cdot \nabla u) + h(x, u, |\nabla u|) = f(x)$$
which implies the corollary for general $1 \leq p \leq \infty$.

\subsection{Broader outlook.} We believe that the partial differential equation
$$ \boxed{ -\Delta u - b \cdot |\nabla u| = 1}$$
with Dirichlet boundary conditions may be of broader interest. It is a classical and very difficult problem to study the level sets of solutions of elliptic PDEs   \cite{banc, banc2, beck, beck2, brascamp, brasco, b4, freitas, grieser, hamel, kawohl, magna, makar, manas, stein1, talenti}. An example of a basic question \cite{lions} is whether solutions in convex domains `inherit' the convexity of the domain and have convex level sets; this was shown to hold for the solution of $-\Delta u =1$ by Makar-Limanov \cite{makar} and for the first Laplacian eigenfunction $-\Delta u = \lambda_1 u$ by Brascamp-Lieb \cite{brascamp} but is known to fail \cite{hamel} for the general equation $-\Delta u = f(u)$.\\

The equation $ -\Delta u - b \cdot |\nabla u| = 1$ shares
many characteristics with the torsion function $-\Delta u = 1$ and is
perhaps its simplest nonlinear analogue. In particular, it is not very
difficult to show that for $b \rightarrow 0$ it
converges to the torsion function (and thus has convex level sets on
convex domains); conversely, for
$b \rightarrow \infty$, the interpretation as a
drift-diffusion suggests that the solution should be of the form
$\ln u(x) \asymp \tfrac{1}{2}b \cdot \mbox{dist}(x, \partial
\Omega)$ (see e.g. \cite{fw}) and should also have convex level sets on convex domains;
one could wonder whether this is then also true in the intermediate
regime $b = 1$. There are several other results about level sets \cite{beck, makar, stein1, volkov} that may be interpreted as a stepping stones to a more complete theory of level sets of elliptic PDEs, we believe that  $ -\Delta u - b \cdot |\nabla u| = 1$ might be another natural test case.

\section{The Proof}
\subsection{An Application of the Maximum Principle.} We first establish that the optimal vector field is nonlinearly coupled to the solution via
$$b = -\|b\|_{L^{\infty}} \frac{\nabla u}{|\nabla u|}.$$
We actually show a stronger result saying that for any solution $u$, replacing the vector field by $b = -\|b\|_{L^{\infty}} \nabla u/|\nabla u|$ increases the function everywhere.

\begin{lemma}
 Suppose
\begin{align*}
  -\Delta w + b \cdot \nabla w = 1 \qquad \mbox{in}~\Omega
 \end{align*}
 with Dirichlet boundary conditions. Then, with the convention that $\nabla w/|\nabla w| =  0$ whenever $\nabla w = 0$,
 the solution of
 \begin{align*} 
 -\Delta u - \|b\|_{L^{\infty}} \frac{\nabla w}{|\nabla w|} \cdot \nabla u = 1   \qquad \mbox{in}~\Omega
\end{align*}
with Dirichlet boundary conditions satisfies
$$ u \geq w.$$
\end{lemma}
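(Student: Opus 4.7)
The plan is to set $v = u - w$ and show that $v$ satisfies an elliptic differential inequality with zero boundary data, so that the weak maximum principle forces $v \geq 0$. The key algebraic observation driving the whole argument is that $-\|b\|_{L^{\infty}} \nabla w/|\nabla w|$ is the \emph{pointwise minimizer} of $\beta \mapsto \beta \cdot \nabla w$ over all vectors $\beta$ with $|\beta| \leq \|b\|_{L^{\infty}}$, so replacing $b$ by this vector field can only decrease $b \cdot \nabla w$, which (because of the sign in front of the drift term) should only increase the solution.

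Concretely, I would subtract the two equations to get
\begin{equation*}
-\Delta v \;-\; \|b\|_{L^{\infty}} \frac{\nabla w}{|\nabla w|} \cdot \nabla u \;-\; b \cdot \nabla w \;=\; 0,
\end{equation*}
and then add and subtract $\|b\|_{L^{\infty}} (\nabla w/|\nabla w|) \cdot \nabla w = \|b\|_{L^{\infty}} |\nabla w|$ to rewrite this as
\begin{equation*}
-\Delta v \;-\; \|b\|_{L^{\infty}} \frac{\nabla w}{|\nabla w|} \cdot \nabla v \;=\; \|b\|_{L^{\infty}} |\nabla w| + b \cdot \nabla w.
\end{equation*}
The right-hand side is nonnegative, since $b \cdot \nabla w \geq -|b|\,|\nabla w| \geq -\|b\|_{L^{\infty}} |\nabla w|$ pointwise. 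Thus $v$ is a supersolution of a linear elliptic operator $L v := -\Delta v + c(x) \cdot \nabla v$ with bounded (though possibly discontinuous) drift $c(x) = -\|b\|_{L^{\infty}} \nabla w / |\nabla w|$, with $v = 0$ on $\partial \Omega$. The weak maximum principle for $L$ then yields $v \geq 0$, i.e., $u \geq w$.

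The main subtlety I would have to address is the behavior on the set $\{\nabla w = 0\}$, where the coefficient in the PDE for $u$ is only defined by convention. Since the resulting drift is merely bounded and measurable, I would invoke the version of the maximum principle for linear elliptic equations with $L^{\infty}$ coefficients (e.g., the Alexandrov--Bakelman--Pucci estimate, or the standard weak maximum principle in $W^{1,2}$), rather than any classical version requiring continuity of the drift. A secondary issue is the existence and sufficient regularity of $u$ itself, which I would handle by solving the linear equation with the frozen coefficient $\nabla w/|\nabla w|$ via Lax--Milgram in $H^1_0(\Omega)$; this is the real technical point, but it is standard once one accepts bounded measurable coefficients. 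With those ingredients in place the computation above goes through unchanged.
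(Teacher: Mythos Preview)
Your proof is correct and follows essentially the same route as the paper: both arguments use the Cauchy--Schwarz inequality to show that $w$ is a subsolution of the operator $L\phi = -\Delta\phi - \|b\|_{L^\infty}(\nabla w/|\nabla w|)\cdot\nabla\phi$, then subtract and apply the maximum principle to $u-w$. You are somewhat more explicit than the paper about the technical issues (discontinuity of the drift on $\{\nabla w = 0\}$, existence of $u$, the need for a maximum principle with merely bounded coefficients), but the underlying argument is identical.
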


\begin{proof}
We observe that whenever $\nabla w \neq 0$, by Cauchy-Schwarz,
$$  b \cdot \nabla w \geq - \|b\|_{L^{\infty}} \abs{\nabla w} = - \norm{b}_{L^{\infty}} \frac{\nabla w}{|\nabla w|} \cdot \nabla w.$$
Recalling our convention that  $\nabla w/|\nabla w| =  0$ whenever $\nabla w = 0$, this inequality continues to hold in that case as well.
Thus
$$  -\Delta w - \|b\|_{L^{\infty}} \frac{\nabla w}{|\nabla w|} \cdot \nabla w \leq 1.$$
Subtracting the two solutions yields
$$ -\Delta (u-w) -  \|b\|_{L^{\infty}} \frac{\nabla w}{|\nabla w|} \cdot (\nabla u - \nabla w) \geq 0.$$
The maximum principle now implies
\begin{equation*}
u \geq w.\qedhere
\end{equation*}
\end{proof}

This Lemma reduces the problem to the study of the nonlinear PDE
$$  -\Delta u - \|b\|_{L^{\infty}}| \nabla u| = 1.$$  
Lemma 1 has an interesting geometric interpretation (see Fig. 2):
suppose we have a given vector field $b$ that gives rise to a profile
of exit times $u$. Let us consider a small neighborhood around a point
$u(x_0)$. In order to ensure that the diffusion particle survives for
a longer time, we would like the force field $b$ to push it in a
suitable direction. However, the suitable direction is given by $u$
itself: larger values of $u$ mean larger lifetime, we want to locally
push the particle in direction $\nabla u(x_0)$. It is this geometric
interpretation that suggests that the PDE might perhaps be considered
a rather natural nonlinear analogue of the torsion function
$-\Delta u = 1$.

\begin{figure}[h!]
\centering
\begin{tikzpicture}[scale=1]
\draw [ultra thick] (0,0) to[out=320, in = 180] (3,-1);
\draw [dashed] (0,0.3) to[out=320, in = 180] (3,-1+0.3);
\draw [dashed] (0,0.7) to[out=320, in = 180] (3,-1+0.7);
\draw [dashed] (0,1) to[out=320, in = 180] (3,-1+1);
\draw [thick, ->] (1.5, -0.8) -- (1.7, 0);
\draw [thick, ->] (1, -0.7) -- (1.3, 0);
\end{tikzpicture}
\caption{Lemma 1 illustrated: the optimal vector field is orthogonal to the level sets.}
\end{figure}
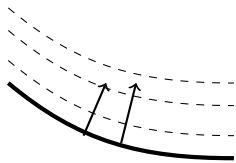

\subsection{An estimate on the $L^1$ norm of the gradient.} The purpose of this section is to establish part of the statement of the main Theorem: 
among all domains $\Omega$ with fixed volume, we are interested in solutions of
$$ -\Delta u - \|b\|_{L^{\infty}} |\nabla u| = 1$$
with Dirichlet boundary conditions. Among those solutions
$$ \int_{\Omega}{ | \nabla u| dx} \qquad \mbox{is maximized by the ball.}$$
This is the main result of this section. We abbreviate, for the remainder of the argument, $b = \|b\|_{L^{\infty}}$.
Before discussing the main argument of the section, we argue that the inwards pointing normal derivative cannot vanish.

\begin{lemma} 
Let $\Omega \subset \mathbb{R}^d$ be a bounded $C^2$ domain. Then, for some constant $c_{\Omega} > 0$ and all $b \geq 0$, the solution of
$$ -\Delta u - b |\nabla u| = 1$$
satisfies
$$ u(x) \geq c_{\Omega} \cdot d(x, \partial \Omega),$$
where $d(x, \partial \Omega)$ is the distance to the boundary. In particular, the normal derivative does not vanish on the boundary.
\end{lemma}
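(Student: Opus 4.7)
The plan is to compare $u$ with the classical torsion function. Let $v$ denote the solution of $-\Delta v = 1$ in $\Omega$ with $v = 0$ on $\partial \Omega$. Since $b \geq 0$ and $\abs{\nabla u} \geq 0$, the solution $u$ of the nonlinear equation satisfies
\[
-\Delta u = 1 + b \abs{\nabla u} \geq 1 = -\Delta v \qquad \text{in } \Omega,
\]
and $u = v = 0$ on $\partial \Omega$. Thus $w := u - v$ is superharmonic ($-\Delta w = b \abs{\nabla u} \geq 0$) with vanishing Dirichlet data, and the maximum principle gives $u \geq v$ pointwise in $\Omega$.

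It therefore suffices to prove the distance-type lower bound $v(x) \geq c_\Omega \, d(x, \partial \Omega)$ for the ordinary torsion function on a bounded $C^2$ domain. I would do this via a standard interior-ball barrier. Since $\partial \Omega \in C^2$, there exists $r_0 > 0$ such that at every $x_0 \in \partial \Omega$ there is an interior ball $B_{r_0}(y_0) \subset \Omega$ tangent to $\partial \Omega$ at $x_0$. On this ball, the radial function $V(x) = (r_0^2 - \abs{x - y_0}^2)/(2d)$ solves $-\Delta V = 1$ with zero boundary data, so by the maximum principle $v \geq V$ in $B_{r_0}(y_0)$. For $x$ close to $x_0$ on the inward normal, $V(x)$ is comparable to $r_0 - \abs{x - y_0}$, and by the interior ball condition this is in turn comparable to $d(x, \partial \Omega)$. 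This delivers the bound in a uniform collar of $\partial \Omega$, and positivity of $v$ on the compact complementary interior region (a strong maximum principle consequence) extends the bound to all of $\Omega$ at the cost of possibly shrinking $c_\Omega$.

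The final sentence of the lemma, that the inward normal derivative does not vanish, then follows immediately: since $u(x_0) = 0$ for $x_0 \in \partial \Omega$ and $u(x) \geq c_\Omega d(x, \partial \Omega)$ inside, the difference quotient along the inward normal is bounded below by $c_\Omega$, so $-\partial u / \partial \nu \geq c_\Omega > 0$.

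The only technical worry is regularity: the nonlinear PDE for $u$ has a term that is merely Lipschitz in $\nabla u$, so one must verify that the comparison argument for $w = u - v$ is justified. This is routine since $u \in C^{1,\alpha} \cap W^{2,p}$ for any $p < \infty$, so $b \abs{\nabla u}$ is a legitimate nonnegative right-hand side for the linear comparison. If one prefers to avoid regularity subtleties, one can instead work with the regularized equation $-\Delta u_\varepsilon = 1 + b\sqrt{\abs{\nabla u_\varepsilon}^2 + \varepsilon^2}$, which has smooth solutions, run the entire argument for $u_\varepsilon$, and pass to the limit $\varepsilon \to 0$.
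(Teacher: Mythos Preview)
Your proof is correct and follows essentially the same strategy as the paper: reduce to the torsion function $v$ (the paper's $w$) by the maximum principle comparison $u\ge v$, and then invoke a Hopf-type boundary bound for $v$. The only difference is cosmetic: the paper cites the Zaremba--Hopf--Oleinik boundary point lemma as a black box for the inequality $v(x)\ge c_\Omega\, d(x,\partial\Omega)$, whereas you supply the standard interior-ball barrier argument explicitly.
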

\begin{proof}
The result is known to hold for all positive, super-harmonic functions in bounded $C^2-$domains. It is known as the Zaremba-Hopf-Oleinik Lemma or, sometimes, as boundary point lemma; we refer to Kuran \cite{kuran}, Nazarov \cite{nazarov} or the book of Pucci \& Serrin \cite{pucci}. The solution of
$$ -\Delta w = 1$$
with Dirichlet boundary conditions 
is such a positive, superharmonic function in $\Omega$ and thus satisfies the inequality. Moreover, by Lemma 1, we have $u \geq w$ and this implies the result.
\end{proof}

Lemma 2 can be extended to slightly rougher domains (which is not the focus of our paper). It is known that a $C^{1,1}$ condition suffices and there has been work on finding the exact threshold of regularity that is required for the boundary point lemma to apply, see for example Apushkinskaya \& Nazarov \cite{apu}. \\

We can now state the main result of this section. For simplicity of exposition, we
 define, for $b > 0$ a fixed constant, the function
$$ f(c) = b \cdot \sup_{|\Omega| = c} ~~\int_{\Omega} { |\nabla u| dx}.$$
We observe that, using the PDE and a Green formula,
\begin{align*}
f(c) =\sup_{|\Omega| = c} \int_{\Omega} {b \cdot |\nabla u| dx}  &=\sup_{|\Omega| = c} \left(  \int_{ \Omega} -\Delta u - 1 ~dx\right)\\
&=\sup_{|\Omega| = c} \left(  - | \Omega| + \int_{\partial \Omega} \frac{\partial u}{\partial n}~ dx\right) \\
&= - c + \sup_{|\Omega| = c}\int_{\partial \Omega} \frac{\partial u}{\partial n}~ dx
\end{align*}
where $n$ is the inward pointing normal vector. So we can write equivalently
$$ f(c) + c = \sup_{|\Omega| =c} ~~\int_{\partial \Omega}~ \frac{\partial u}{\partial n} dx.$$
We introduce one last constant $c_d$ as the sharp constant in the isoperimetric inequality in the formulation
$$ | \partial \Omega| \geq c_d |\Omega|^{\frac{d-1}{d}}.$$
The main result of this section is the following estimate.
\begin{lemma} The function $f$ satisfies the differential inequality
$$ f'(c) \leq  b\frac{f(c) +c}{c_d c^{\frac{d-1}{d}}}.$$
\end{lemma}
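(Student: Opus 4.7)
The key tool is the shift-invariance of the PDE noted in the introduction: if $u$ solves $-\Delta u - b|\nabla u| = 1$ on $\Omega$ with Dirichlet data, then for every $\varepsilon > 0$ the shifted function $u - \varepsilon$ solves the same PDE with Dirichlet data on the superlevel set $\Omega_\varepsilon := \{ u \geq \varepsilon\}$. Fix $c > 0$ and let $\Omega$ be an (almost-)optimizer for $f(c)$. Writing $c_\varepsilon := |\Omega_\varepsilon|$, the domain $\Omega_\varepsilon$ is admissible for $f(c_\varepsilon)$ with the competitor $u - \varepsilon$, so
\[ f(c_\varepsilon) \;\geq\; b \int_{\Omega_\varepsilon} |\nabla u|\,dx, \qquad \text{and hence} \qquad f(c) - f(c_\varepsilon) \;\leq\; b \int_{\Omega \setminus \Omega_\varepsilon} |\nabla u|\,dx + o(1). \]
The plan is to compare this decrement with $c - c_\varepsilon$ and pass to the limit $\varepsilon \to 0^+$.

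Both sides are expressible via the coarea formula as integrals over level sets:
\[ c - c_\varepsilon \;=\; \int_0^\varepsilon \int_{\{u=t\}} \frac{d\sigma}{|\nabla u|}\,dt, \qquad \int_{\Omega\setminus\Omega_\varepsilon} |\nabla u|\,dx \;=\; \int_0^\varepsilon |\{u=t\}|\,dt. \]
Lemma 2 guarantees $|\nabla u|$ is bounded away from zero near $\partial \Omega$, so both quantities are differentiable at $\varepsilon = 0^+$, with derivatives $-\int_{\partial \Omega} |\nabla u|^{-1}\,d\sigma$ and $-|\partial \Omega|$ respectively. Dividing and letting $\varepsilon \to 0^+$ yields
\[ f'(c) \;\leq\; \frac{b\,|\partial \Omega|}{\displaystyle \int_{\partial \Omega} |\nabla u|^{-1}\,d\sigma}, \]
which should be read as an upper-left Dini derivative if $f$ is not a priori differentiable.

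Two classical inequalities now close the argument. Cauchy--Schwarz on $\partial \Omega$ gives
\[ |\partial \Omega|^2 \;=\; \left( \int_{\partial \Omega} |\nabla u|^{1/2} \cdot |\nabla u|^{-1/2}\,d\sigma \right)^{\!2} \;\leq\; \int_{\partial \Omega} |\nabla u|\,d\sigma \cdot \int_{\partial \Omega} \frac{d\sigma}{|\nabla u|}, \]
so the right-hand side above is at most $b \int_{\partial \Omega} |\nabla u|\,d\sigma / |\partial \Omega|$. Since $u = 0$ on $\partial \Omega$ and $\nabla u$ points along the inward normal, the numerator equals $\int_{\partial \Omega} \partial_n u\,d\sigma = f(c) + c$, using the identity derived before the statement of the lemma. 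The isoperimetric inequality $|\partial \Omega| \geq c_d\,c^{(d-1)/d}$ then produces exactly the claimed bound. The main obstacle is not the chain of estimates --- each step is sharp on the ball (Cauchy--Schwarz is tight when $|\nabla u|$ is constant on the boundary, as it is for the ball by symmetry, and so is the isoperimetric inequality) --- but rather the regularity of $f$: the sup defining $f(c)$ need not be attained in a smooth domain, so the argument is best run on an almost-optimizing sequence and the conclusion interpreted as an inequality of upper Dini derivatives, which is enough for the ODE comparison with the ball's profile used in the next step of the main theorem. A minor technical point, that level sets $\{u = t\}$ may contain critical points, is handled by Sard's theorem and approaching $\varepsilon = 0$ along regular values.
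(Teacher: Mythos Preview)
Your proof is correct and follows essentially the same route as the paper: shift-invariance of the PDE to pass to superlevel sets, the coarea formula to compute both the volume decrement and the gradient integral over the thin shell, Cauchy--Schwarz on $\partial\Omega$ to relate $\int_{\partial\Omega}|\nabla u|^{-1}$ and $\int_{\partial\Omega}|\nabla u|$, and finally the isoperimetric inequality. The only differences are cosmetic (you form the difference quotient $\tfrac{f(c)-f(c_\varepsilon)}{c-c_\varepsilon}$ first and apply Cauchy--Schwarz afterwards, whereas the paper inserts Cauchy--Schwarz into the bound for $|\Omega_\varepsilon|$ before rearranging) and your added discussion of attainment, Dini derivatives, and Sard's theorem is more careful than the paper, which tacitly works with an optimizing domain; note a harmless sign slip in your derivative computation---the derivatives of $c-c_\varepsilon$ and $\int_{\Omega\setminus\Omega_\varepsilon}|\nabla u|$ at $\varepsilon=0^+$ are $+\int_{\partial\Omega}|\nabla u|^{-1}$ and $+|\partial\Omega|$, not their negatives.
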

\begin{proof}
We start by noting the elementary estimate, using $\partial u/\partial n > 0$,
\begin{align*}
 | \partial \Omega| = \int_{\partial \Omega} 1 &\leq \left( \int_{\partial \Omega}{ \frac{1}{\frac{\partial u}{\partial n}} dx} \right)^{1/2}  \left( \int_{\partial \Omega}{ \frac{\partial u}{\partial n} dx} \right)^{1/2} \\
&\leq  \left( \int_{\partial \Omega}{ \frac{1}{\frac{\partial u}{\partial n}} dx} \right)^{1/2} (f(c)+c)^{1/2}
 \end{align*}
 and therefore
 \begin{equation} \label{ineq1}
  \int_{\partial \Omega}{ \frac{1}{\frac{\partial u}{\partial n}} dx} \geq \frac{|\partial \Omega|^2}{f(c) + c}.
 \end{equation}
We note that
$$ f(c) =  \sup_{|\Omega| = c} ~~\int_{\Omega} { b \cdot |\nabla u| dx}$$
is invariant under subtracting constants. We can thus introduce $\Omega_{\varepsilon}$ as the region
where $u-\varepsilon$ is positive (and note that $u-\varepsilon$ satisfies the nonlinear PDE with Dirichlet boundary conditions on $\Omega_{\varepsilon}$). Then
\begin{align*}
 \int_{\Omega} {b \cdot |\nabla u| dx} &= b\int_{u \leq \varepsilon} { |\nabla u| dx} + b\int_{\Omega_{\varepsilon}} { |\nabla (u-\varepsilon)| dx}\\
 &\leq  b\int_{u \leq \varepsilon} { |\nabla u| dx} + f(|\Omega_{\varepsilon}|).
 \end{align*}
The coarea formula shows that
$$ \int_{u \leq \varepsilon} { |\nabla u| dx} = \int_{0}^{\varepsilon} \mathcal{H}^{d-1} \left\{ x: u(x) = t\right\} dt,$$
where $\mathcal{H}^{d-1}$ is the $(d-1)-$dimensional Hausdorff measure.
Since the normal derivative does not vanish on the boundary and since the boundary is $C^2$, we have, as $\varepsilon \rightarrow 0$,
$$ \int_{u \leq \varepsilon} { |\nabla u| dx} = \varepsilon | \partial \Omega| + o(\varepsilon).$$
In particular, this shows that $f(c)$ is continuous in $c$, as
$$f(\abs{\Omega}) - f(\abs{\Omega_{\varepsilon}}) \leq \varepsilon
\abs{\partial \Omega} + o(\varepsilon).$$
  We also observe that, using
\eqref{ineq1} and that the solution is $u \in C^2$ (see e.g. \cite{caff})
\begin{align*}
 | \Omega_{\varepsilon}| &= |\Omega| - \varepsilon  \int_{\partial \Omega}{ \frac{1}{\frac{\partial u}{\partial n}} dx}  + o(\varepsilon)\\
 & \leq |\Omega| - \varepsilon \frac{| \partial \Omega|^2}{f(c)+c} + o(\varepsilon).
\end{align*}
Therefore, for $\varepsilon \rightarrow 0^+$,
\begin{align*}
b\int_{\Omega}{|\nabla u| dx} &\leq   b\int_{u \leq \varepsilon} { |\nabla u| dx} + b\int_{\Omega_{\varepsilon}} { |\nabla (u-\varepsilon)| dx} \\
&\leq b\varepsilon |\partial \Omega| + f(|\Omega_{\varepsilon}|) + o(\varepsilon)\\
&\leq  b \varepsilon |\partial \Omega| + f\left( c -  \frac{| \partial \Omega|^2}{f(c) + c} \varepsilon\right) + o(\varepsilon), 
\end{align*}
where we have used the continuity of $f$. 
Rearranging and letting $\varepsilon \rightarrow 0$ then implies
$$ f'(c) \leq b\frac{f(c) +c}{|\partial \Omega|}$$
and the desired result follows from an application of the isoperimetric inequality.
\end{proof}

\medskip 
We now argue that Lemma 3 is optimal for the ball. This is an explicit computation that we will now carry out.
 Let us define
 $$ h(c) + c = \int_{\partial B_c}~ \frac{\partial u}{\partial n} dx,$$
where $B_c$ is the ball normalized to satisfy $|B_c| = c$.
\begin{lemma}
We have
$$ h'(c) =  b\frac{h(c) +c}{|\partial B_c|}.$$
\end{lemma}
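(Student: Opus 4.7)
My plan is to revisit the proof of Lemma 3 and observe that, when $\Omega = B_c$, every inequality there becomes an equality, so the differential inequality for $f$ becomes a differential equation for $h$. Two structural facts drive this. First, by rotational symmetry and uniqueness, the solution $u$ on $B_c$ is radial, so $\partial u / \partial n$ is some constant $\alpha$ on $\partial B_c$ (with $\alpha > 0$ by Lemma 2). Second, because the PDE $-\Delta u - b|\nabla u|=1$ is invariant under additive constants, each sub-level set $\{u \geq \varepsilon\}$ is a concentric sub-ball $B_{c'}$, and $u-\varepsilon$ restricted to it is the unique radial Dirichlet solution on that smaller ball. In particular, the sub-ball realizes the supremum defining $h$, so
\[ b\int_{\{u \geq \varepsilon\}} |\nabla (u-\varepsilon)|\, dx = h(c'). \]

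Next I would trace through Lemma 3 step by step and record that each inequality collapses. The Cauchy--Schwarz step in \eqref{ineq1} is tight because $\partial u/\partial n \equiv \alpha$, so $\int_{\partial B_c}(\partial u/\partial n)^{-1}\, d\sigma = |\partial B_c|^2/(h(c)+c)$ exactly, and consequently
\[ c' = |\{u \geq \varepsilon\}| = c - \varepsilon\,\frac{|\partial B_c|^2}{h(c)+c} + o(\varepsilon). \]
The co-area identity $b\int_{\{u \leq \varepsilon\}} |\nabla u|\, dx = b\varepsilon\, |\partial B_c| + o(\varepsilon)$ is unchanged from Lemma 3 since $u\in C^2$ up to the boundary, and the isoperimetric inequality $|\partial \Omega| \geq c_d|\Omega|^{(d-1)/d}$ is also saturated on the ball. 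Substituting into the splitting
\[ h(c) \;=\; b\int_{\{u \leq \varepsilon\}} |\nabla u|\, dx \;+\; b\int_{\{u \geq \varepsilon\}} |\nabla (u-\varepsilon)|\, dx \]
I would obtain the one-line expansion
\[ h(c) = b\varepsilon\,|\partial B_c| + h\!\left( c - \varepsilon\,\frac{|\partial B_c|^2}{h(c)+c} + o(\varepsilon)\right) + o(\varepsilon). \]

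Dividing by $\varepsilon$ and sending $\varepsilon \to 0^+$ yields the claimed identity $h'(c) = b(h(c)+c)/|\partial B_c|$. The only point that needs separate attention is the existence of the derivative $h'(c)$, but this is not a genuine obstacle: the radial reduction of the PDE is a second-order linear ODE for $U$ depending smoothly on the radius $R = R(c)$, so the quantity $h(c) = b\int_0^{R} |U'(r)|\, d\omega_d r^{d-1}\, dr$ is $C^1$ in $c$. Alternatively, running the same one-sided expansion from above (slightly enlarging the ball) gives matching limits directly. Either way, Lemma 4 is nothing more than the equality case of Lemma 3 applied along the one-parameter family of balls.
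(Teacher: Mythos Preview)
Your argument is correct and is a genuinely different route from the paper's. The paper proves Lemma~4 by a direct computation: it writes the radial ODE
\[
-\frac{1}{r^{d-1}}\frac{\partial}{\partial r}\Bigl(r^{d-1}\frac{\partial g}{\partial r}\Bigr) + b\,g'(r) = 1,
\]
identifies $\int_{\partial B_c}\partial u/\partial n = -\omega_d r^{d-1}g'(r)$, changes variables from volume $c$ to radius $r$ via $dc = |\partial B^R|\,dR$, and then reads off $h'(c)$ from the ODE itself. Your approach instead re-runs the level-set splitting of Lemma~3 on the ball and observes that every inequality there (Cauchy--Schwarz on the boundary, the sup in the definition of $f$, the isoperimetric step) is saturated, turning the differential inequality into an equality. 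This is arguably more conceptual, since it makes explicit that Lemma~4 is exactly the equality case of Lemma~3, which is what the surrounding text wants; the paper's computation is more self-contained but leaves that identification to a remark. One small wording slip: $h$ is not defined as a supremum but simply as the value on the ball, so the phrase ``the sub-ball realizes the supremum defining $h$'' should just say that $u-\varepsilon$ restricted to the sub-ball \emph{is} the Dirichlet solution there, whence $b\int_{\{u\geq\varepsilon\}}|\nabla(u-\varepsilon)|\,dx = h(c')$ by definition. The mathematics you wrote is unaffected.
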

We observe that $ | \partial B_c| = c_d |B_c|^{\frac{d-1}{d}} = c_d c^{\frac{d-1}{d}}$, so the ODE coincides exactly with the upper bound derived in Lemma 3.
\begin{proof} The PDE 
$$ -\Delta u - b |\nabla u| = 1$$
has a radial solution on the ball. Moreover, the solution is monotonically decreasing. Assuming the ball is centered at the origin, we can rewrite the Laplacian in polar coordinates and obtain the ODE for $g(\abs{x}) : = u(x)$: 
\begin{equation} \label{maineq}
- \frac{1}{r^{d-1}} \frac{\partial}{\partial r} \left( r^{d-1} \frac{\partial g}{\partial r}\right) + b g'(r) = 1. 
\end{equation}
A priori we would be forced to solve the ODE again and again for balls
of different volume setting Dirichlet boundary condition: this is not
the case for this particular ODE since we have invariance under adding
constants. In particular, we may fix arbitrary initial conditions, say
$g(0) = 0$. On a ball with radius $R$, the solution is then given by
$g(r) - g(R)$.  We observe that the expression $r^{d-1} g'(r)$
corresponds exactly to the normal derivative. More formally, we note
that
$$ h'(c) = -1 + \frac{\partial}{\partial c} \int_{\partial B_c}~ \frac{\partial u}{\partial n} dx,$$
where the derivative with respect to $c$ is with respect to volume. Let us denote the ball with radius $R$ by $B^R$ and let us assume that
$R$ is chosen such that $|B^R|=c$. Then, locally, around $R$, we have
$$ |B^{R+\varepsilon}| \sim |B^R| + \varepsilon |\partial B^R|$$
and this suggests the change of variables
$$ \frac{\partial}{\partial c} \int_{\partial B_c}~ \frac{\partial u}{\partial n} dx = \frac{1}{|\partial B^R|}\frac{\partial}{\partial r} \int_{\partial B^r}~ \frac{\partial u}{\partial n} dx ~\big|_{r=R}.$$
This is where we can use the equation (\ref{maineq}) which, after multiplying with the normalizing volume and rearranging, looks like
$$-  \frac{\partial}{\partial r} \left(\omega_d r^{d-1} \frac{\partial g}{\partial r}\right) = \omega_d r^{d-1} - b \omega_d g'(r) r^{d-1}.$$
We observe that 
$$ \frac{\partial}{\partial r} \int_{\partial B^r}~ \frac{\partial u}{\partial n} dx = -  \frac{\partial}{\partial r} \left(\omega_d r^{d-1} \frac{\partial g}{\partial r}\right)$$
and thus
$$ \frac{1}{|\partial B^R|}\frac{\partial}{\partial r} \int_{\partial B^r}~ \frac{\partial u}{\partial n} dx ~\big|_{r=R} = \frac{ \omega_d R^{d-1}}{|\partial B^R|}  - \frac{b g'(R) \omega_dR^{d-1}}{|\partial B^R|}.$$
We note that, by definition,
$$  \frac{ \omega_d R^{d-1}}{|\partial B^R|}  = 1$$
and
\begin{align*}
-\frac{b g'(R) \omega_dR^{d-1}}{|\partial B^R|} &= \frac{b}{|\partial B^R|}  \int_{\partial B^R}~ \frac{\partial u}{\partial n} dx\\
&=\frac{b}{|\partial B_c|}\left( h(c) + c\right)
\end{align*}
which is the desired statement.
\end{proof}

\subsection{An Estimate for the $L^{\infty}-$norm.} A similar argument, coupled with our estimate on $f(c)$, can be used to show the main result.
We define
$$ g(c) = \sup_{|\Omega|=c}~ \|u\|_{L^{\infty}}.$$
\begin{lemma} We have
$$ g'(c) \leq \frac{f(c) +c}{c_d^2 c^{ \frac{2d-2}{d}}}$$
with equality if and only if the domain is a ball.
\end{lemma}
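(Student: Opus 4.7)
The plan is to mimic the level-set and coarea bookkeeping of Lemma 3, but now tracking $\|u\|_{L^{\infty}}$ in place of $\int_{\Omega} b|\nabla u|$. Fix $\Omega$ with $|\Omega|=c$ and let $\Omega_{\varepsilon} = \{u \geq \varepsilon\}$. Because the PDE $-\Delta u - b|\nabla u| = 1$ is invariant under adding constants, the function $v := u - \varepsilon$ solves the same equation on $\Omega_{\varepsilon}$ with zero Dirichlet data, and the maximum of $u$ drops by exactly $\varepsilon$ under this shift:
\[
\|u\|_{L^{\infty}(\Omega)} = \|v\|_{L^{\infty}(\Omega_{\varepsilon})} + \varepsilon \leq g(|\Omega_{\varepsilon}|) + \varepsilon.
\]
The comparison principle for the PDE applied to nested domains gives that $g$ is monotone nondecreasing.

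To compare $|\Omega_{\varepsilon}|$ with $c$ I reuse the Cauchy--Schwarz step \eqref{ineq1} from Lemma 3 together with the definition of $f$:
\[
|\Omega| - |\Omega_{\varepsilon}| = \varepsilon \int_{\partial \Omega} \frac{1}{\partial u/\partial n}\, d\sigma + o(\varepsilon) \geq \varepsilon \frac{|\partial \Omega|^{2}}{f(c)+c} + o(\varepsilon).
\]
Applying the isoperimetric inequality $|\partial \Omega|^{2} \geq c_{d}^{2} c^{(2d-2)/d}$ then gives
\[
|\Omega_{\varepsilon}| \leq c - \varepsilon \frac{c_{d}^{2} c^{(2d-2)/d}}{f(c)+c} + o(\varepsilon),
\]
a bound that is uniform over all admissible $\Omega$.

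Combining the two displays via the monotonicity of $g$, taking the supremum over $\Omega$ with $|\Omega|=c$, and letting $\varepsilon \to 0^{+}$ produces the claimed estimate
\[
g'(c) \leq \frac{f(c)+c}{c_{d}^{2} c^{(2d-2)/d}}.
\]
The equality case pinpoints when the two inequalities used above are both sharp: equality in Cauchy--Schwarz forces $\partial u/\partial n$ to be constant on $\partial \Omega$, while equality in the isoperimetric inequality forces $\Omega$ to be a ball. Both conditions hold simultaneously precisely for the ball with its radial solution, where equality can also be verified directly by differentiating the identity for $h$ in Lemma 4.

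The main obstacle I anticipate is that $g$ is defined as a supremum, so it need not be everywhere differentiable nor attained by some $\Omega$; hence $g'(c)$ must be understood as an upper Dini derivative, and care must be taken to ensure the $o(\varepsilon)$ remainders stay controlled along a near-maximizing sequence of domains. The $C^{2}$ boundary assumption is already used (through the Hopf--Oleinik Lemma in Lemma 2) to guarantee that $\partial u/\partial n$ stays bounded away from zero, which is what makes the first-order expansion of $|\Omega_{\varepsilon}|$ in $\varepsilon$ meaningful.
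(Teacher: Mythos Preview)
Your proposal is correct and follows essentially the same approach as the paper: both use the shift invariance to write $\|u\|_{L^\infty(\Omega)}=\varepsilon+\|u-\varepsilon\|_{L^\infty(\Omega_\varepsilon)}$, bound $|\Omega_\varepsilon|$ via the Cauchy--Schwarz estimate \eqref{ineq1}, and then apply the isoperimetric inequality. The only cosmetic differences are that you invoke the isoperimetric inequality before passing to the limit (the paper first writes $g'(c)\le (f(c)+c)/|\partial\Omega|^2$ and then optimizes), and you are somewhat more explicit about monotonicity of $g$ and the Dini-derivative interpretation, which the paper leaves implicit.
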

\begin{proof} As before, given any domain $\Omega$, we can consider the domain $\Omega_{\varepsilon}$ on which $(u - \varepsilon)_{+}$ is positive (and thus solves the PDE there). For $\varepsilon$ sufficiently small, this domain $\Omega_{\varepsilon}$ satisfies, as above,
\begin{align*}
 | \Omega_{\varepsilon}| &= |\Omega| - \varepsilon  \int_{\partial \Omega}{ \frac{1}{\frac{\partial u}{\partial n}} dx}  + o(\varepsilon)\\
 &\leq |\Omega| - \varepsilon \frac{| \partial \Omega|^2}{f(c)+c} + o(\varepsilon).
  \end{align*}
  Moreover, we have the elementary fact that
  $$ \| u \|_{L^{\infty}(\Omega)} = \varepsilon +  \| u - \varepsilon \|_{L^{\infty}(\Omega_{\varepsilon})}.$$
This implies that, for $\varepsilon$ sufficiently small,
$$ g(c) \leq \varepsilon + g\left( c -  \frac{| \partial \Omega|^2}{f(c)+c} \varepsilon\right) + \mathcal{O}(\varepsilon^2)$$
and thus
$$ g'(c) \leq \frac{f(c)+c}{|\partial \Omega|^2} \leq \frac{f(c) +c}{c_d^2 c^{\frac{2d-2}{d}}}.$$
However, $f$ is maximized for the ball.  Conversely, if we are dealing
with the ball, then the normal derivative $\partial u/\partial n$ is
constant on the boundary and we have equality in the bound
\begin{align*}
 | \partial \Omega| = \int_{\partial \Omega} 1 &= \left( \int_{\partial \Omega}{ \frac{1}{\frac{\partial u}{\partial n}} dx} \right)^{1/2}  \left( \int_{\partial \Omega}{ \frac{\partial u}{\partial n} dx} \right)^{1/2} \\
&=  \left( \int_{\partial \Omega}{ \frac{1}{\frac{\partial u}{\partial n}} dx} \right)^{1/2} (f(c)+c)^{1/2}.
 \end{align*}
This then implies equality in the bound
\begin{align*}
 | \Omega_{\varepsilon}| &= |\Omega| - \varepsilon  \int_{\partial \Omega}{ \frac{1}{\frac{\partial u}{\partial n}} dx}  + o(\varepsilon)\\
 & = |\Omega| - \varepsilon \frac{| \partial \Omega|^2}{f(c)+c} + o(\varepsilon).
\end{align*}
Altogether, this then implies that we have equality in the bound for $g'(c)$ and this shows that we have equality for the ball.
\end{proof}

\subsection{An Estimate for the $L^{p}-$norm.} We conclude by adapting the argument to the $L^p$-norm. We argue similarly and introduce the function
$$ h_p(c) = \sup_{|\Omega|=c}~ \|u\|_{L^{p}}^p$$
and will again argue starting at level set $\varepsilon$, calling the arising superlevel set $\Omega_{\varepsilon}$. 
\begin{lemma} We have, for all $p \geq 1$, $p \in \mathbb{N}$
$$ h_p'(c) \leq p\cdot h_{p-1}(|\Omega|) \cdot \frac{f(c)  + c}{|\partial \Omega|^2}.$$
\end{lemma}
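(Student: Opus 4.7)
The plan is to mirror the proof of Lemma 5, replacing the identity $\|u\|_{L^\infty(\Omega)} = \varepsilon + \|u-\varepsilon\|_{L^\infty(\Omega_\varepsilon)}$ by a binomial expansion of $u^p$ on the superlevel set $\Omega_\varepsilon = \{u > \varepsilon\}$. The shrinking-volume estimate $|\Omega_\varepsilon| \leq c - \varepsilon |\partial \Omega|^2/(f(c)+c) + o(\varepsilon)$ that underpinned Lemma 5 is inherited unchanged, since it only used the PDE and the non-vanishing normal derivative guaranteed by Lemma 2.

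Fix any admissible $\Omega$ with $|\Omega|=c$ and split $\int_\Omega u^p\,dx$ over $\{u \leq \varepsilon\}$ and $\Omega_\varepsilon$. The first piece is pointwise bounded by $\varepsilon^p$ on a set of measure $O(\varepsilon)$ (as computed in Lemmas 3 and 5), hence contributes $O(\varepsilon^{p+1}) = O(\varepsilon^2)$. On $\Omega_\varepsilon$ I expand
$$u^p = \bigl((u-\varepsilon)+\varepsilon\bigr)^p = (u-\varepsilon)^p + p\varepsilon(u-\varepsilon)^{p-1} + \sum_{k=2}^{p}\binom{p}{k}\varepsilon^k (u-\varepsilon)^{p-k},$$
and observe that each summand in the remainder carries at least $\varepsilon^2$ against an integral that stays bounded as $\varepsilon \to 0$, so the whole remainder is $O(\varepsilon^2)$. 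Since $u-\varepsilon$ solves the same PDE on $\Omega_\varepsilon$ with Dirichlet boundary conditions, the two leading integrals are controlled by $h_p(|\Omega_\varepsilon|)$ and $h_{p-1}(|\Omega_\varepsilon|) \leq h_{p-1}(c)$, where the latter inequality uses monotonicity of $h_{p-1}$ in its argument.

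Combining these estimates and then choosing $\Omega$ to be a near-maximizer of $h_p$ at level $c$ (so that $\int_\Omega u^p\,dx \geq h_p(c) - o(\varepsilon)$) produces
$$h_p(c) \leq h_p\!\left(c - \varepsilon\,\tfrac{|\partial \Omega|^2}{f(c)+c} + o(\varepsilon)\right) + p\varepsilon\, h_{p-1}(c) + O(\varepsilon^2);$$
rearranging, dividing by $\varepsilon$, and letting $\varepsilon \to 0^+$ yields the claimed differential inequality. The main obstacle I anticipate is not the calculation itself but rather the bookkeeping around the monotonicity (and implicit Lipschitz regularity) of $h_{p-1}$ used twice above: once to pass from $h_{p-1}(|\Omega_\varepsilon|)$ to $h_{p-1}(c)$, and once implicitly to make sense of $h_p'(c)$. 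Both should be obtainable by induction, with the base case $h_0(c) = c$ being trivial and each $h_p$ inheriting Lipschitz control from $h_{p-1}$ through the very bound being proved; comparing solutions on nested domains via Lemma 1-style maximum principle arguments gives monotonicity. Once this is in place, everything else is a direct adaptation of the argument for Lemma 5.
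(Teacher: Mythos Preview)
Your proposal is correct and follows essentially the same approach as the paper: split $\int_\Omega u^p$ over $\{u\le\varepsilon\}$ and $\Omega_\varepsilon$, binomially expand $((u-\varepsilon)+\varepsilon)^p$, bound the two leading terms by $h_p(|\Omega_\varepsilon|)$ and $h_{p-1}(|\Omega_\varepsilon|)$, and feed in the volume estimate $|\Omega_\varepsilon|\le c-\varepsilon|\partial\Omega|^2/(f(c)+c)+o(\varepsilon)$. The paper treats the base case $p=1$ explicitly (where $h_0(c)=c$ enters as $\int_{\Omega_\varepsilon}1=|\Omega_\varepsilon|$) before bootstrapping, but otherwise the arguments are the same; you are in fact more explicit than the paper about the monotonicity/continuity of $h_{p-1}$ and the near-maximizer selection, both of which the paper leaves implicit.
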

\begin{proof}
We decompose
\begin{align*}
 \int_{\Omega} {u^p dx} &= \int_{u \leq \varepsilon} { u^p dx} + \int_{\Omega_{\varepsilon}} {u^p dx}\\
&=  \int_{u \leq \varepsilon} { u^p dx} + \int_{\Omega_{\varepsilon}} {(u_{\varepsilon} + \varepsilon)^p dx}.
\end{align*}
The first term is fairly easy to deal with since, as $\varepsilon \rightarrow 0^+$, we have
$$ \int_{u \leq \varepsilon} { u^p dx} = (1+o(1)) \frac{\varepsilon^{p+1}}{p+1} \int_{\partial \Omega} \left( \frac{\partial u}{\partial n} \right)^p d\sigma = o(\varepsilon).$$
The second term can be expanded, asymptotically, like
$$ \int_{\Omega_{\varepsilon}} {(u_{\varepsilon} + \varepsilon)^p dx} = \int_{\Omega_{\varepsilon}} {u_{\varepsilon}^p dx} + p \varepsilon  \int_{\Omega_{\varepsilon}} {u_{\varepsilon}^{p-1} dx} + o(\varepsilon).$$ 
We now argue first in the case of $p=1$. We obtain
$$ h_1(|\Omega|) \leq \varepsilon |\Omega_{\varepsilon}| + h_1\left( | \Omega_{\varepsilon}|\right)$$
and use the inequality 
$$ | \Omega_{\varepsilon}|  \leq |\Omega| - \varepsilon \frac{| \partial \Omega|^2}{f(c)+c} + o(\varepsilon)$$
to argue that
$$ h_1(|\Omega|) \leq \varepsilon \left( |\Omega| - \varepsilon \frac{| \partial \Omega|^2}{f(c)+c} \right) + h_1\left( |\Omega| - \varepsilon \frac{| \partial \Omega|^2}{f(c)+c}\right) + o(\varepsilon).$$
Letting $\varepsilon \rightarrow 0$, we obtain
$$ h_1(|\Omega|) \leq \varepsilon  |\Omega|  + h_1\left( |\Omega| - \varepsilon \frac{| \partial \Omega|^2}{f(c)+c}\right) + o(\varepsilon)$$
and thus
$$ h_1'(c) \leq \frac{c(f(c) + c)}{|\partial \Omega|^2}.$$
We obtain, as before, equality in the case of the ball. This settles the case $p=1$. We will now bootstrap this estimate to higher values of $p$. Arguing as above, we obtain
$$ h_p(|\Omega|) \leq p \cdot \varepsilon \cdot h_{p-1}( |\Omega_{\varepsilon}|) + h_p\left( | \Omega_{\varepsilon}|\right)$$
and this results in the inequality
$$ h_p'(c) \leq p\cdot h_{p-1}(|\Omega|) \cdot \frac{f(c)  + c}{|\partial \Omega|^2}.$$
Again, we have equality for the ball. This establishes the desired statement for $\|u\|_{L^p}$ and $p \in \mathbb{N}$.
\end{proof}

\end{document}